 \newtheorem{thm}{Theorem}[section]
 \newtheorem{lem}[thm]{Lemma}
 \newtheorem{exam}[thm]{Example}
 \newtheorem{prop}[thm]{Proposition}
 \def\Hol{\mathop{\rm Hol}\nolimits}
\newcommand{\C}{{\mathbb C}}   
 \newcommand{\R}{{\mathbb R}}           
\newcommand{\N}{{\mathbb N}}        
\newcommand{\D}{{\mathbb D}}
\author{W. Arendt}
\address{Wolfgang Arendt, Institute of Applied Analysis, University of Ulm. Helmholtzstr. 18, D-89069 Ulm (Germany)} 
\email{wolfgang.arendt@uni-ulm.de}
\author{I. Chalendar}
\address{Isabelle Chalendar, Universit\'e Paris Est Marne-la-Vall\'ee, 5 bld Descartes, Champs-sur-Marne, 77454 Marne-la-Vall\'ee, Cedex 2  (France)}
\email{isabelle.chalendar@u-pem.fr}
\title[Generators inducing holomorphic semiflows]{Generators of semigroups on Banach spaces inducing holomorphic semiflows}
\keywords{semiflow of analytic functions, generator of $C_0$-semigroup, semigroup of composition operators}
\subjclass[2010]{30D05, 47D03, 47B33}
\begin{document}	

\begin{abstract}
Let $A$  be the generator of a $C_0$-semigroup  $T$ on a   Banach space of analytic functions on the open unit disc.
If $T$ consists of composition operators, then there exists a holomorphic function  $G:\D\to\C$	such that $Af=Gf'$ with maximal domain. The aim of the paper is the study of the  reciprocal implication. %
\end{abstract}
\maketitle
\section{Introduction} 
Let $\Omega$ be a non-empty open and connected subset of $\C$ and let  $X$ be a complex Banach space which embeds continuously in the Fr\'echet space of all holomorphic functions on $\Omega$ which we denote by $\Hol(\Omega)$. Given a holomorphic global semiflow $\varphi:\R^+\times \Omega \to \Omega$, in many cases, $T(t)f=f\circ \varphi(t,\cdot)$ defines a 
$C_0$-semigroup $T$  on $X$. Its infinitesimal generator
 $A$ is of the form
 \begin{equation}\label{eq:first}
  D(A)=\{f\in X:Gf'\in X\}\mbox{ and }Af=Gf',
  \end{equation}
  where $G\in \Hol(\Omega)$ is the infinitesimal generator of the semiflow.    A non-exhaustive list of  examples of  Banach spaces on which global holomorphic semiflows induce a $C_0$-semigroup of composition operators can be found in \cite{BP,CM,team,GY}. Classical examples treated in the literature for $\Omega=\D$ are weighted Hardy, Dirichlet and Bergman spaces, the disc algebra, VMOA and the little Bloch space. \\

  In this paper, we study the reciprocal  implication, namely: \\
  \begin{center}
  given $G\in \Hol (\Omega)$, if $T$ is a $C_0$-semigroup on $X$ whose generator 
  $A$ satisfies $D(A)=\{f\in X:Gf'\in X\}\mbox{ and }Af=Gf'$, is $T$ necessarily a semigroup of composition operators? 
  \end{center}
\vspace{0.2cm}

In Section~\ref{sec:3} we provide an example to show that without an extra hypothesis on $X$ the answer is no in general.  When $X$ is the hilbertian Hardy space or Dirichlet space on $\D$, provided $T$ is  quasicontractive, the answer is positive \cite{ACP2015,ACP2016}. The quasicontractivity condition can be removed. If  $\Hol(\overline{\D})$ embeds continuously in $X$, E. Gallardo-Guti\'errez and D. Yakubovitch \cite{GY} gave a positive answer to this problem.\\

In the present paper we give a positive answer to this problem with different conditions on the space $X$ and we 
use an approach different from \cite{GY}. In fact, we are inspired by the theory of Ordinary Differential Equations, and this is one reason to consider general domains in $\mathbb C$. Of course, by 
the Riemann mapping theorem, one can reduce the problem to the disc if $\Omega$ is simply connected and different from $\mathbb C$ (see more comments in Section~\ref{sec:final}).  

The main result of Section~\ref{sec:3} is Theorem~\ref{th:2} where we give a positive answer to our problem provided $X$ satisfies the "evaluation condition" $(E)$. 

In Section~\ref{sec:4bis} we restrict ourselves to bounded domains  $\Omega$ whose boundary satisfies a weak regularity condition. A positive answer is given assuming that   
$X$ contains the identity, that the domain of holomorphy $\Omega$ is \textit{maximal} for $X$ as well as  a density condition $(D)$ which is automatic when the polynomials are dense in $X$.   

Our approach consists in considering the Cauchy problem 
\[u'(t)=G(u(t)),\mbox{  }u(0)=z\in \Omega,\]
where $G\in \Hol(\Omega)$ is an arbitrary function.   
The main point is to show that its solutions are \textit{global} (i.e. for each $z\in \Omega$ the solution exists on $[0,\infty)$), whenever the operator $A$ given by (\ref{eq:first}) generates a $C_0$-semigroup on $X$.

It turns out that for bounded domains the results established in Section~\ref{sec:4bis}  are most efficient. Indeed, for the classical Banach spaces such as Hardy, Dirichlet, Bergman, VMOA and little Bloch, we obtain a complete picture which is presented in the concluding Section~\ref{sec:5bis}.  

\section{Holomorphic semiflows}\label{sec:2}
In this section we put together the essential properties of holomorphic semiflows. 
Let $\Omega\subset\C$ be a non-empty open subset of $\C$
and $G\in\Hol(\Omega)$. 
We consider the following initial value problem:
\begin{equation}\label{eq:pb}
u'(t)=G(u(t))\mbox{ with }u(0)=z,
\end{equation} 
for $z\in \Omega$. By \[\varphi(\cdot,z):[0,\tau(z))\to \Omega \] we denote the unique maximal solution of (\ref{eq:pb}). Here $\tau(z)>0$ and if $\tau(z)<\infty$ then  there exists an increasing sequence $(t_n)_n$ in 
$[0,\tau(z))$ such that  $\lim_{n\to \infty}t_n=\tau(z)$ and $(\varphi(t_n,z))_n$ converges either to a point on the boundary of $\Omega$ or there exists such a sequence $(t_n)_n$ such that $\lim_{n\to \infty} |\varphi(t_n,z)|=\infty$. Of course the last case can only occur if $\Omega$ is unbounded. These are standard results in Ordinary Differential Equations, we refer to \cite[Chap.8, \S 5-8]{ms74}.
We call $\phi$ \textit{the semiflow generated by} $G$. It satisfies the semigroup property   
\begin{equation}\label{eq:sg}
\varphi(t+s,z)=\varphi(t,\varphi(s,z)),
\end{equation}
where $t+s<\tau(z)$. Note that this last condition holds if and only if $s<\tau(z)$ and $t<\tau(\varphi(s,z))$. We call the semiflow \textit{global} if $\tau(z)=\infty$ for all $z\in\Omega$. In  that case (\ref{eq:sg}) holds for all $t,s\geq 0$ and $z\in\Omega$.   
Moreover, $\varphi:[0,\infty)\times\Omega\to \Omega$ is continuous (see \cite[Chap.8, \S 7, Thm. 2]{ms74}) and $\varphi(t,\cdot)\in \Hol(\Omega)$ for all $t>0$ (see the proof of \cite[Chap.~1, Thm.~9]{MW}). See also \cite[Sec. 2]{AERS} for related results.

Even though phrased in a slightly different way, the main result of Berkson and Porta is the following remarkable characterization.

\begin{thm}[\cite{BP}, Theorem 3.3]\label{th:bp}
	Let $\Omega=\D$. The semiflow $\varphi$ is global if and only if there exist $b\in\overline{\D}$ and a holomorphic function $F:\D\to \C$ such that $\mathop{Re}(F(z))\geq 0$ for all $z\in\D$ and 
	\[G(z)=(b-z)(1-\overline{b}z)F(z),\]
	for all $z\in\D$.   
\end{thm}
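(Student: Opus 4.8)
The plan is to prove the two implications separately, in each case reducing (\ref{eq:pb}) to a normal form by a M\"obius change of variable adapted to the point $b$, and then tracking a monotone quantity along the orbits. Two substitutions do all the work. When $b\in\D$, let $\sigma(z)=\frac{b-z}{1-\bar b z}$ be the involutive automorphism of $\D$ interchanging $0$ and $b$; since $\sigma'(z)=-(1-|b|^2)/(1-\bar b z)^2$ and $(b-z)(1-\bar b z)=\sigma(z)(1-\bar b z)^2$, writing $G=(b-z)(1-\bar b z)F$ and $v(t)=\sigma(u(t))$ converts (\ref{eq:pb}) into $v'=-(1-|b|^2)\,v\,F(u)$. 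When $|b|=1$, let $C(z)=\frac{1+\bar b z}{1-\bar b z}$ be the Cayley map sending $\D$ onto the right half-plane $\{\re w>0\}$ and $b$ to $\infty$; using $\bar b(b-z)=1-\bar b z$ one computes $C'(z)(b-z)(1-\bar b z)=2$, so $w(t)=C(u(t))$ satisfies the remarkably simple equation $w'=2F(u)$.

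For sufficiency I assume $\re F\ge0$. In the interior case the first substitution gives $\frac{d}{dt}|v|^2=-2(1-|b|^2)|v|^2\re F(u)\le0$, so $|v(t)|\le|\sigma(z)|<1$ and the orbit $u(t)=\sigma(v(t))$ stays in a fixed compact subset of $\D$; the escape alternative recalled in Section~\ref{sec:2} then forces $\tau(z)=\infty$. In the boundary case the second substitution gives $\frac{d}{dt}\re w=2\re F(u)\ge0$, so $w$ stays in the half-plane $\{\re w\ge\re C(z)\}$, which keeps $u$ away from every boundary point except possibly $b$; here globality can fail only if $w\to\infty$ (equivalently $u\to b$) in finite time.

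Excluding this finite-time escape is the step I expect to be the main obstacle. The crude modulus bound coming from the Herglotz representation of $F$ only yields $|F(u)|=O(1/|b-u|^2)$ inside the invariant horodisc (note $\re C(u)=1/H(u)$ with $H(u)=|b-u|^2/(1-|u|^2)$), which is too weak for a Gr\"onwall estimate; the cancellation in $\re F\ge0$ must be used. The clean way to package it is the lemma that a holomorphic field mapping the half-plane into its closure is semicomplete: setting $g=2F\circ C^{-1}$ and passing to the upper half-plane via $\lambda=iw$ makes $m(\lambda)=ig(w)$ a Nevanlinna (Pick) function, and from its integral representation I would bound $\re\big(m(\lambda)/\lambda\big)=\re\big(g(w)/w\big)$ from above on $\{\re w\ge a\}$. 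Since $\frac{d}{dt}|w|^2=2|w|^2\re\big(g(w)/w\big)$, such a bound gives at most exponential growth of $|w|$ and rules out finite-time blow-up.

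For necessity the same two substitutions are run in reverse, now feeding in the contraction properties of self-maps. Global existence makes each $\varphi(t,\cdot)$ a holomorphic self-map of $\D$, and by the semigroup property (\ref{eq:sg}) together with the continuity and holomorphy recalled in Section~\ref{sec:2} the family has a common Denjoy--Wolff point $b\in\overline{\D}$. Define $F=G/\big((b-z)(1-\bar b z)\big)$; this lies in $\Hol(\D)$ because $G(b)=0$ cancels the simple zero of $b-z$ when $b\in\D$, while the denominator is zero-free in $\D$ when $|b|=1$. If $b\in\D$, the Schwarz--Pick inequality applied to $\varphi(t,\cdot)$ makes $|v(t)|^2=|\sigma(\varphi(t,z))|^2$ nonincreasing, so $\frac{d}{dt}|v|^2=-2(1-|b|^2)|v|^2\re F(u)\le0$; if $|b|=1$, Julia's inequality (the angular derivative at $b$ being $\le1$) makes $H(\varphi(t,z))$ nonincreasing, so $\frac{d}{dt}\re w=\frac{d}{dt}(1/H)\ge0$, that is $2\re F(u)\ge0$. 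Evaluating at $t=0$ yields $\re F(z)\ge0$, first for $z\ne b$ and then everywhere by continuity, which is exactly the asserted factorization.
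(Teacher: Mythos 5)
The paper does not prove this theorem at all: it is quoted from Berkson--Porta (\cite{BP}, Theorem 3.3). So your proposal has to stand as a self-contained proof, and it does not. Much of it is fine: both normal-form computations are correct; the sufficiency argument for $b\in\D$ is complete (the quantity $|\sigma(u)|^2$ traps the orbit in a compact subset of $\D$ and the escape alternative of Section~\ref{sec:2} finishes); and in the necessity direction the Schwarz--Pick/Julia monotonicity computations do yield $\re F\ge 0$ --- though there you also invoke a common Denjoy--Wolff point for all the maps $\varphi(t,\cdot)$, which is not among the facts recalled in Section~\ref{sec:2} and is itself a theorem about commuting self-maps (with the elliptic case, e.g.\ $\varphi(t,z)=e^{it}z$, needing separate treatment). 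The genuine gap is in the step you yourself flag as the main obstacle: for $|b|=1$, excluding finite-time blow-up of $w'=g(w)$ on the right half-plane, where $g=2F\circ C^{-1}$ has $\re g\ge 0$.

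The lemma you propose for that step is false: for a holomorphic map of the right half-plane into its closure, $\re\bigl(g(w)/w\bigr)$ need \emph{not} be bounded above on $\{\re w\ge a\}$; equivalently, for a Pick function $m$, $\re\bigl(m(\lambda)/\lambda\bigr)$ need not be bounded above on $\{\Im\lambda\ge 1\}$. Indeed, for $s\in\R$ the M\"obius map $m_s(\lambda)=\frac{1+s\lambda}{s-\lambda}$ is a Pick function, and for $\lambda=\xi+i$ and the chasing choice $s=\xi+1$ (so $|s-\lambda|^2=2$) one computes
\begin{equation*}
\re\frac{m_s(\lambda)}{\lambda}
=\frac{s\xi\,(1-|\lambda|^2)}{|\lambda|^2\,|s-\lambda|^2}+\frac{s^2-1}{|s-\lambda|^2}
=\frac{\xi(\xi^2+\xi+2)}{2(\xi^2+1)}\;\ge\;\frac{\xi}{2},
\end{equation*}
which is unbounded. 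This survives superposition into a single fixed function: let $s_k=k4^k$ and $m=\sum_{k\ge 1}2^{-k}m_{s_k}$, a locally uniformly convergent sum of Pick functions, hence a Pick function (finite representing measure $\sum_k 2^{-k}\delta_{s_k}$). At $\lambda_k=(s_k-1)+i$ the $k$-th summand is at least $k2^{k-1}-1$ by the display above, while for $j\ne k$ one has $s_j\le s_k/4$ or $s_j\ge 4s_k$, hence $|s_j-\lambda_k|\ge\max(s_j,s_k)/2$ and each remaining summand is bounded below, with total contribution $\ge -1$; therefore $\re\bigl(m(\lambda_k)/\lambda_k\bigr)\to\infty$ although $\Im\lambda_k=1$ for all $k$. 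Transported back to the disc, this $m$ comes from an admissible $F$ with $\re F\ge 0$, so the corresponding $G$ does generate a global semiflow by the very theorem in question --- yet no Gr\"onwall inequality $\frac{d}{dt}|w|^2\le 2M|w|^2$ is available on the invariant half-plane, and your argument cannot rule out finite-time escape to $b$. This is not a repairable technicality along the same lines: the boundary case is precisely the hard content of Berkson--Porta, and the known proofs use different machinery (the original argument of \cite{BP}, or the hyperbolic-metric monotonicity and resolvent/exponential-formula theory of holomorphic generators as in \cite{shoik}, \cite{RS2005}, \cite{AERS}).
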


\section{The Evaluation Condition}\label{sec:3}
 Let $\Omega\subset\C$ be a connected open set
 and let $X$ be a subspace of ${\Hol}(\Omega)$ which is a Banach space endowed with its proper norm $\|\cdot \|_X$.   The following proposition describes 
the compatibility of the norm  $\|\cdot \|_X$ with the topology of  ${\Hol}(\Omega)$. 

If $z\in \Omega$, then we denote by $\delta_z:{\Hol}(\Omega)\to \C$ the evaluation at $z$; i.e. 
\[\langle \delta_z, f\rangle:=f(z),\quad f\in {\Hol}(\Omega).\]
We write $\delta_z\in X'$ if there exists $c\geq 0$ such that 
\[| \langle \delta_z,f\rangle |\leq c\|f\|_X\quad\text{for all } f\in X.\]

\begin{prop}[Prop. 2.1 in \cite{delhi1}]\label{prop2.1}
	The following assertions are equivalent:
	\begin{itemize}
		\item[(i)] $\delta_z\in X'$ for all $z\in\Omega$;
		\item[(ii)] $f_n\to f$ in $X$ implies that $f_n^{(m)}(z)\to f^{(m)}(z)$ uniformly on each compact subset of $\Omega$ and all $m\in \N_0$, where $\N_0=\N\cup\{0\}$;
		\item[(iii)] there exists a compact set $K\subset \Omega$ such that $\{z\in K:\delta_z\in X'\}$ is infinite. 
	\end{itemize}	
\end{prop}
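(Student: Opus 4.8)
The plan is to reduce all three assertions to the single statement that the inclusion map $j\colon X\to\Hol(\Omega)$ is continuous, where $\Hol(\Omega)$ carries its usual Fr\'echet topology of uniform convergence on compact subsets. Since $X$ is a Banach space and $\Hol(\Omega)$ is a Fr\'echet space, the closed graph theorem applies to $j$, so everything comes down to deciding when the graph of $j$ is closed. The observation driving the argument is the following: if $f_n\to f$ in $X$ and at the same time $f_n\to g$ in $\Hol(\Omega)$, then $f_n(z)\to g(z)$ for every $z\in\Omega$ (convergence in $\Hol(\Omega)$ is in particular pointwise), while $f_n(z)\to f(z)$ at every point $z$ where $\delta_z\in X'$. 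Hence $f$ and $g$ are holomorphic functions on the connected set $\Omega$ that agree on the set $S:=\{z\in\Omega:\delta_z\in X'\}$.

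I expect the implication (iii) $\Rightarrow$ (i) to be the crux, and I would handle it as follows. By (iii), $S$ contains an infinite subset of the compact set $K$, which by Bolzano--Weierstrass has an accumulation point $w_0\in K\subseteq\Omega$. Plugging this into the observation above, whenever $f_n\to f$ in $X$ and $f_n\to g$ in $\Hol(\Omega)$ the functions $f$ and $g$ coincide on a set with an accumulation point in $\Omega$; by the identity theorem $f=g$ throughout $\Omega$. Thus the graph of $j$ is closed, and the closed graph theorem gives continuity of $j$. Composing $j$ with the continuous point evaluation on $\Hol(\Omega)$ shows that $f\mapsto f(z)$ is continuous on $X$ for every $z\in\Omega$, which is exactly $\delta_z\in X'$ for all $z\in\Omega$, i.e.\ (i).

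The two remaining implications are short. For (i) $\Rightarrow$ (ii), taking $S=\Omega$ the same closed graph argument gives continuity of $j$, so $f_n\to f$ in $X$ forces $f_n\to f$ uniformly on compact subsets of $\Omega$; the Weierstrass convergence theorem then upgrades this to $f_n^{(m)}\to f^{(m)}$ uniformly on compact sets for every $m\in\N_0$, which is (ii). For (ii) $\Rightarrow$ (iii), the case $m=0$ shows that $f\mapsto f(z)$ is sequentially continuous, hence continuous since $X$ is metrizable, so $\delta_z\in X'$ for all $z$; choosing $K$ to be any nondegenerate closed disc contained in $\Omega$ makes $\{z\in K:\delta_z\in X'\}=K$ infinite.

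The main obstacle is entirely contained in (iii) $\Rightarrow$ (i): passing from boundedness of the evaluations on a merely infinite subset of a compact set to boundedness at every point of $\Omega$. This is precisely what forces the combination of Bolzano--Weierstrass, to produce an interior accumulation point, with the identity theorem, to propagate the coincidence $f=g$ across the connected domain, and it is the reason connectedness of $\Omega$ is used.
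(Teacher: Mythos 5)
Your argument is correct: the closed graph theorem between the Banach space $X$ and the Fr\'echet space $\Hol(\Omega)$, combined with Bolzano--Weierstrass and the identity theorem on the connected set $\Omega$ for the crucial implication (iii) $\Rightarrow$ (i), and Weierstrass' convergence theorem for (i) $\Rightarrow$ (ii), gives a complete proof of the cycle (i) $\Rightarrow$ (ii) $\Rightarrow$ (iii) $\Rightarrow$ (i). Note that the paper itself gives no proof --- it quotes the proposition from \cite{delhi1} --- and your argument is essentially the standard one used there, so there is nothing to add.
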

If these equivalent conditions are satisfied, we write $X \hookrightarrow \Hol(\Omega)$ and call $X$ a \textit{Banach space of holomorphic  functions}. 

In this section we assume furthermore the following \textit{evaluation condition}:
\[{(E)}
\hspace{0.5cm}\mbox{If } z_n\in \Omega \mbox{ such that }z_n\to z\in \overline{\Omega}\cup\{\infty\} \mbox{ and } \lim_{n\to\infty}f(z_n)\mbox{ exists in }\C \]
$
\hspace{0.5cm}\mbox{ for all }  f\in X,\mbox{ then }z\in \Omega.
$\\

 Here we use the one-point compactification $\C\cup\{\infty\}$ of $\C$. Of course if $\Omega$ is bounded this is not needed  since there is no sequence $(z_n)_n\subset\Omega$ converging to $\infty$. \\
   
 Classical examples of such spaces are  provided in Example~\ref{ex1}.\\
 
 Let $G\in \Hol(\Omega)$ and denote by $\varphi$ the semiflow generated by $G$. Let $X\hookrightarrow \Hol(\Omega)$ be a Banach space satisfying the evaluation condition $(E)$, and let $T$ be a $C_0$-semigroup on $X$ with generator $A$.
  

\begin{thm}\label{th:2}
Assume that $Af=Gf'$ for all $f\in D(A)$. Then the semiflow $\varphi$ is global and 
\[T(t)f)(z)=f(\varphi(t,z))\]
for all $f\in X,t\geq 0,z\in\Omega$. \\
Moreover the domain of $A$ is maximal, i.e., $A$ is given by 
\[D(A)=\{f\in X:Gf'\in X\},\hspace{0.5cm}Af=Gf'.\]	
\end{thm}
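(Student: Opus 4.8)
The plan is to exploit the first-order linear equation satisfied by the map $(t,z)\mapsto (T(t)f)(z)$ and to solve it by the method of characteristics, the characteristics being exactly the trajectories of the semiflow $\varphi$. Fix $f\in D(A)$, a point $z\in\Omega$, and a time $s\in[0,\tau(z))$. Since condition $(E)$ presupposes $X\hookrightarrow\Hol(\Omega)$, Proposition~\ref{prop2.1} guarantees that the point evaluations $\delta_w$ and the evaluations of derivatives are continuous for $\|\cdot\|_X$; combined with the fact that $t\mapsto T(t)f$ is a $C^1$-curve in $X$ with $\frac{d}{dt}T(t)f=AT(t)f=G\cdot(T(t)f)'$, this shows that $v(t,w):=(T(t)f)(w)$ is jointly $C^1$ with $\partial_t v(t,w)=G(w)(T(t)f)'(w)$ and $\partial_w v(t,w)=(T(t)f)'(w)$. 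I would then introduce the auxiliary function $\psi(t):=(T(t)f)(\varphi(s-t,z))$ on $[0,s]$. Differentiating by the chain rule and using $\frac{d}{dt}\varphi(s-t,z)=-G(\varphi(s-t,z))$ (from \eqref{eq:pb}), the two terms cancel exactly, so $\psi'\equiv 0$. Comparing $\psi(0)$ and $\psi(s)$ then yields
\[(T(s)f)(z)=f(\varphi(s,z)),\qquad f\in D(A),\ 0\le s<\tau(z).\]

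Next I would remove the restriction $f\in D(A)$ by density: if $f_n\to f$ in $X$ with $f_n\in D(A)$, then $(T(s)f_n)(z)\to(T(s)f)(z)$ and $f_n(\varphi(s,z))\to f(\varphi(s,z))$ because $\delta_z,\delta_{\varphi(s,z)}\in X'$, so the identity persists for every $f\in X$ and every $s<\tau(z)$. With this in hand, globality follows from $(E)$. Suppose $\tau(z)<\infty$. By the standard ODE dichotomy recalled in Section~\ref{sec:2}, there is an increasing sequence $t_n\uparrow\tau(z)$ for which $z_n:=\varphi(t_n,z)$ converges to a boundary point of $\Omega$ or satisfies $|z_n|\to\infty$; in the one-point compactification $z_n\to z_*\in(\overline\Omega\cup\{\infty\})\setminus\Omega$. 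But for each $f\in X$ the limit $\lim_n f(z_n)=\lim_n (T(t_n)f)(z)=(T(\tau(z))f)(z)$ exists, since $T$ is a $C_0$-semigroup defined on all of $[0,\infty)$ and $\delta_z\in X'$. Condition $(E)$ then forces $z_*\in\Omega$, a contradiction. Hence $\tau(z)=\infty$ for every $z$, the semiflow is global, and the representation $(T(t)f)(z)=f(\varphi(t,z))$ holds for all $t\ge 0$.

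For the maximality of the domain, let $B$ denote the operator $Bf=Gf'$ with $D(B)=\{f\in X:Gf'\in X\}$. By hypothesis $A\subseteq B$, and I would establish $A=B$ by proving that $\lambda-B$ is injective for $\re\lambda$ large. If $f\in D(B)$ satisfies $\lambda f=Gf'$, then $t\mapsto e^{-\lambda t}f(\varphi(t,z))$ has vanishing derivative (again by \eqref{eq:pb}), hence is constant, giving $f(\varphi(t,z))=e^{\lambda t}f(z)$; by the representation just proved this means $T(t)f=e^{\lambda t}f$ in $X$. Choosing $\re\lambda$ larger than the growth bound $\omega$ of $T$ contradicts $\|T(t)\|\le Me^{\omega t}$ unless $f=0$, so $\lambda-B$ is injective. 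Since such $\lambda$ also lies in $\rho(A)$, surjectivity of $\lambda-A$ together with $A\subseteq B$ and injectivity of $\lambda-B$ forces $D(B)\subseteq D(A)$, whence $A=B$.

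The main obstacle is the first step, namely making the characteristic computation rigorous. The delicate points are the joint $C^1$-regularity of $v(t,w)=(T(t)f)(w)$ — which requires transferring the $X$-valued differentiability of $T(\cdot)f$ on $D(A)$ to pointwise differentiability in $t$ while simultaneously controlling the spatial derivative through Proposition~\ref{prop2.1} — and the legitimacy of the chain rule for $\psi$. Once the exact cancellation in $\psi'$ is secured, the remainder is either a density argument or a routine resolvent argument; the only other place where genuine care is needed is verifying that the ODE dichotomy really supplies a limit point in $(\overline\Omega\cup\{\infty\})\setminus\Omega$ to which $(E)$ can be applied.
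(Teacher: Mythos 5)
Your proposal is correct, and it splits naturally into two halves relative to the paper. For the representation formula and globality you follow essentially the paper's own route: a method-of-characteristics cancellation along trajectories of $\varphi$ (your $\psi(t)=(T(t)f)(\varphi(s-t,z))$ is, up to reparametrization, the paper's auxiliary function $w(s)=u(t-s,\varphi(s,z))$ in Lemma~\ref{lem:3}), followed by exactly the paper's use of condition $(E)$ to rule out finite escape time (Lemma~\ref{lem:4}). The one technical difference is that you prove the transport equation only for $f\in D(A)$, where $t\mapsto T(t)f$ is $C^1$ in $X$, and then extend the pointwise identity to all of $X$ by density of $D(A)$ and continuity of point evaluations; the paper instead observes that $\delta_z\in D(A')$ with $A'\delta_z=G(z)\delta_z'$ (Lemma~\ref{lem:1}), so that $u_t=Gu_z$ holds for \emph{every} $f\in X$ at once (Lemma~\ref{lem:2}) and no density step is needed --- a slicker device, though your version, with the joint-$C^1$ verification you flag via Proposition~\ref{prop2.1}, is equally rigorous. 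Where you genuinely diverge is the maximality of the domain. The paper integrates $g=Gf'$ along trajectories to obtain $\frac{1}{t}\int_0^t T(s)g\,ds=\frac{1}{t}\bigl(T(t)f-f\bigr)$ and lets $t\downarrow 0$, which exhibits $f\in D(A)$ and $Af=g$ directly, using nothing beyond strong continuity. You instead run the abstract extension/injectivity argument: with $B$ the maximal operator one has $A\subseteq B$, $\lambda-A$ surjective for $\re\lambda>\omega$ (Hille--Yosida), and $\lambda-B$ injective because an eigenfunction $Gf'=\lambda f$ would give $T(t)f=e^{\lambda t}f$ and violate the growth bound; note this step correctly relies on globality and on the representation holding for all $f\in X$, both available at that stage. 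Your route is a valid and arguably shorter alternative that trades the paper's elementary, self-contained integral identity for an appeal to resolvent theory; the eigenfunction computation is the same trick as the characteristics argument and fits the spirit of the proof well.
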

For the proof of Theorem~\ref{th:2}, we use  the following lemmas which will also be fundamental for Section~\ref{sec:4bis}. 

In the following lemma
 $A'$ denotes the adjoint of the generator $A$.

\begin{lem}\label{lem:1}
	 For all $z\in \Omega$, $\delta_z\in D(A')$ and $A'\delta_z=G(z)\delta'_z$, where 
	\[\langle \delta_z',f\rangle =f'(z).\]   
\end{lem}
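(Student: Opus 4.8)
The plan is to verify directly the defining property of the adjoint. Recall that a functional $\psi\in X'$ lies in $D(A')$ precisely when the map $f\mapsto \langle \psi, Af\rangle$, defined on $D(A)$, is continuous for the norm $\|\cdot\|_X$; in that case $A'\psi$ is its unique continuous extension to $X$. Since $A$ generates a $C_0$-semigroup it is densely defined and closed, so this characterization applies. I would therefore compute $\langle \delta_z, Af\rangle$ for $f\in D(A)$ and exhibit it as the restriction of a concrete bounded functional.

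First I would observe that both $\delta_z$ and $\delta_z'$ belong to $X'$. Because $X\hookrightarrow \Hol(\Omega)$, Proposition~\ref{prop2.1} is available, and its assertion (ii) (applied with $m=0$ and $m=1$) says that convergence in $X$ forces convergence of the functions and of their first derivatives at each point of $\Omega$. Hence $f\mapsto f(z)$ and $f\mapsto f'(z)$ are both continuous on $X$, so $\delta_z,\delta_z'\in X'$; consequently $G(z)\delta_z'\in X'$ as a scalar multiple.

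Next, for $f\in D(A)$ I would use the hypothesis $Af=Gf'$ to write
\[
\langle \delta_z, Af\rangle = (Af)(z) = (Gf')(z) = G(z)f'(z) = \langle G(z)\delta_z', f\rangle.
\]
Thus on $D(A)$ the functional $f\mapsto \langle \delta_z, Af\rangle$ coincides with the bounded functional $G(z)\delta_z'$. By the characterization of the adjoint recalled above, this simultaneously yields $\delta_z\in D(A')$ and $A'\delta_z=G(z)\delta_z'$, which is the claim.

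I do not expect a serious obstacle: once the two evaluation functionals are known to be bounded, the statement reduces to the one-line identity above. The only genuine input is the boundedness of $\delta_z'$, which is exactly what the standing hypothesis $X\hookrightarrow \Hol(\Omega)$ (equivalently, the conclusion of Proposition~\ref{prop2.1}) supplies. Without it, the computation would merely assert continuity of $f\mapsto (Af)(z)$ on $D(A)$ and would not identify $A'\delta_z$ as an explicit functional on all of $X$; the point of invoking $(E)$ and the embedding is precisely to make $\delta_z'$ a legitimate element of the dual.
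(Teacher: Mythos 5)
Your proof is correct and is essentially the paper's own argument: the paper's entire proof is the one-line computation $\langle Af,\delta_z\rangle = G(z)f'(z)=\langle f, G(z)\delta_z'\rangle$ for $f\in D(A)$, with the boundedness of $\delta_z'$ (your appeal to Proposition~\ref{prop2.1}(ii)) and the density/adjoint characterization left implicit, so you have simply made the same argument explicit. One small correction to your closing remark: condition $(E)$ plays no role here --- only the embedding $X\hookrightarrow\Hol(\Omega)$ is needed to put $\delta_z'$ in $X'$, and indeed the paper points out that $(E)$ is first used only in Lemma~\ref{lem:4}.
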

\begin{proof}
Let $f\in D(A)$. Then 
\[\langle Af,\delta_z\rangle =G(z)f'(z)=\langle f, G(z)\delta_z'\rangle.\]	
\end{proof}	
\begin{lem}\label{lem:2}
Let $f\in X$ and set $u(t,z)=(T(t)f)(z)$. Then \[u_t(t,z)=G(z)u_z(t,z)\]
 for all $t\geq 0,z\in \Omega$ and where, as usual, $u_z:=\frac{d}{dz}u$,  $u_t:=\frac{d}{dt}u$. 
\end{lem}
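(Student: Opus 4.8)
The plan is to realize $u(t,z)$ as a duality pairing and to differentiate it in $t$ by transferring the derivative onto the functional $\delta_z$ via the adjoint generator. Writing $u(t,z)=\langle T(t)f,\delta_z\rangle$, I want to show that $t\mapsto\langle T(t)f,\delta_z\rangle$ is differentiable with $\frac{d}{dt}\langle T(t)f,\delta_z\rangle=\langle T(t)f,A'\delta_z\rangle$, and then invoke Lemma~\ref{lem:1} to evaluate $A'\delta_z=G(z)\delta_z'$. Since for a general $f\in X$ the orbit $t\mapsto T(t)f$ need not be norm-differentiable, I cannot differentiate the orbit directly; instead I work throughout with the scalar function $t\mapsto\langle T(t)f,\phi\rangle$ for a fixed functional $\phi\in D(A')$.

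First I would treat $f\in D(A)$. There $t\mapsto T(t)f$ is differentiable with $\frac{d}{dt}T(t)f=T(t)Af=AT(t)f$, and the definition of the adjoint gives, for every $\phi\in D(A')$,
\[\frac{d}{dt}\langle T(t)f,\phi\rangle=\langle AT(t)f,\phi\rangle=\langle T(t)f,A'\phi\rangle.\]
Integrating from $0$ to $t$ yields the identity
\[\langle T(t)f,\phi\rangle=\langle f,\phi\rangle+\int_0^t\langle T(s)f,A'\phi\rangle\,ds.\]
Now I would extend this to arbitrary $f\in X$: for fixed $t$ and $\phi\in D(A')$ both sides are bounded linear functionals of $f$ (the right-hand integral is controlled by $\|A'\phi\|\,\|f\|\int_0^tMe^{\omega s}\,ds$ using the growth bound $\|T(s)\|\le Me^{\omega s}$), so since they agree on the dense subspace $D(A)$ they agree on all of $X$.

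With the integral identity in hand for every $f\in X$, the integrand $s\mapsto\langle T(s)f,A'\phi\rangle$ is continuous by the strong continuity of $T$ together with the boundedness of the functional $A'\phi$; the fundamental theorem of calculus then shows that $t\mapsto\langle T(t)f,\phi\rangle$ is continuously differentiable (one-sidedly at $t=0$) with $\frac{d}{dt}\langle T(t)f,\phi\rangle=\langle T(t)f,A'\phi\rangle$. Taking $\phi=\delta_z$ and using Lemma~\ref{lem:1},
\[u_t(t,z)=\langle T(t)f,A'\delta_z\rangle=\langle T(t)f,G(z)\delta_z'\rangle=G(z)\langle T(t)f,\delta_z'\rangle=G(z)(T(t)f)'(z).\]
Finally, since for fixed $t$ the map $z\mapsto u(t,z)$ is exactly the holomorphic function $T(t)f\in X\subset\Hol(\Omega)$, its $z$-derivative is $u_z(t,z)=(T(t)f)'(z)=\langle T(t)f,\delta_z'\rangle$, and the identity $u_t=G\,u_z$ follows for all $t\ge 0$ and $z\in\Omega$.

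The main obstacle is precisely the passage from $f\in D(A)$ to a general $f\in X$: one must avoid differentiating the non-differentiable orbit and instead argue via the scalar integrated identity and the density of $D(A)$, checking that both sides depend continuously on $f$ in the $X$-norm. Everything else reduces to the standard adjoint computation and the fundamental theorem of calculus, with Lemma~\ref{lem:1} supplying the explicit value of $A'\delta_z$.
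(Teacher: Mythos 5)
Your proof is correct and takes essentially the same approach as the paper: write $u(t,z)=\langle T(t)f,\delta_z\rangle$, differentiate in $t$ by moving the derivative onto the functional via $A'\delta_z=G(z)\delta_z'$ from Lemma~\ref{lem:1}, and identify $\langle T(t)f,\delta_z'\rangle$ with $u_z(t,z)$. The paper's proof is a one-line version of this computation; your density-plus-integral-identity argument simply supplies the rigorous justification (for general $f\in X$, not just $f\in D(A)$) of the weak differentiability step that the paper invokes implicitly.
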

\begin{proof}
	Note that, using Lemma~\ref{lem:1}, we have:\\
\[	
\begin{array}{ll}  u_t(t,z)  &= \frac{d}{dt}\langle T(t)f,\delta_z\rangle = 
	  \langle T(t)f,A'\delta_z\rangle=G(z)(T(t)f)'(z)\\
	  & =G(z)u_z(t,z). 
\end{array}
\]	
\end{proof}

\begin{lem}\label{lem:3}
	Let $f\in X$. Then 
	\[(T(t)f)(z)=f(\varphi(t,z))\]
for all $z\in\D$ and $0\leq t<\tau(z)$.
\end{lem}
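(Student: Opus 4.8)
The plan is to fix $z \in \Omega$ and $f \in X$, then show that the two functions $t \mapsto (T(t)f)(z)$ and $t \mapsto f(\varphi(t,z))$ solve the same initial value problem on $[0, \tau(z))$ and hence coincide by uniqueness. Set $u(t,z) = (T(t)f)(z)$. Lemma~\ref{lem:2} already tells us that $u_t(t,z) = G(z) u_z(t,z)$ for all $t \geq 0$ and $z \in \Omega$; this is the key transport-equation structure I want to exploit via the method of characteristics.

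**First** I would define, for fixed $z$ with $\tau(z)$ its maximal existence time, the scalar function $g(t) := u(t, \varphi(t,z))$ on $[0, \tau(z))$; here I am composing the solution $u$ of the transport equation with the characteristic curve $s \mapsto \varphi(s,z)$. Differentiating in $t$ and using the chain rule together with the defining equation $\frac{d}{dt}\varphi(t,z) = G(\varphi(t,z))$ from~(\ref{eq:pb}), I expect
\[
g'(t) = u_t(t,\varphi(t,z)) + u_z(t,\varphi(t,z))\, G(\varphi(t,z)).
\]
Applying Lemma~\ref{lem:2} at the point $\varphi(t,z) \in \Omega$ gives $u_t(t,\varphi(t,z)) = G(\varphi(t,z))\, u_z(t,\varphi(t,z))$. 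Wait—the two terms then have opposite signs only if one sets up the characteristic in the right time-direction, so **I would instead** run the characteristic backward: put $g(t) := u(t, \varphi(T-t, z))$ for a fixed target time $T$, or equivalently check directly that the right combination makes $g$ constant. The correct choice yields $g'(t) \equiv 0$, so $g$ is constant on its interval, and evaluating at the two endpoints identifies $u(t,z)$ with $u(0, \varphi(t,z)) = f(\varphi(t,z))$.

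**The main technical point** will be justifying the differentiation: I need $t \mapsto u(t,w)$ to be differentiable (this follows from $\delta_w \in D(A')$ by Lemma~\ref{lem:1}, giving $u_t = G(w) u_z$ as a genuine derivative), and I need enough joint regularity in $(t,w)$ to apply the chain rule along the characteristic. Since $T(t)f \in X \hookrightarrow \Hol(\Omega)$ and evaluation together with its derivative are continuous by Proposition~\ref{prop2.1}, the maps $w \mapsto u(t,w)$ and $w \mapsto u_z(t,w)$ are holomorphic, and $\varphi(s,z)$ is continuous in $s$; the semigroup law~(\ref{eq:sg}) for $\varphi$ lets me relate values along the curve to the semigroup action of $T$. **The hard part** is this joint-continuity/differentiability bookkeeping needed to legitimately differentiate the composition $g(t) = u(t,\varphi(t,z))$—pointwise differentiability of each factor is not by itself enough, so I would establish that $g$ is (locally Lipschitz or) continuously differentiable, possibly by writing $g$ as a limit of difference quotients and using the continuity afforded by condition~(E) and Proposition~\ref{prop2.1} to pass to the limit. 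Once $g' \equiv 0$ is established rigorously, the identity $(T(t)f)(z) = f(\varphi(t,z))$ on $[0,\tau(z))$ is immediate.
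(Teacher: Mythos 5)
Your corrected backward-characteristic function $g(s)=u(s,\varphi(T-s,z))$ is, up to the reparametrization $s\mapsto T-s$, exactly the function $w(s)=u(t-s,\varphi(s,z))$ that the paper differentiates, so your proof is essentially the paper's: Lemma~\ref{lem:2} together with $\varphi_t=G(\varphi)$ makes the combination constant, and evaluating at the two endpoints gives $(T(t)f)(z)=f(\varphi(t,z))$. The chain-rule bookkeeping you flag is settled by Proposition~\ref{prop2.1}(ii) alone (norm-continuity of $t\mapsto T(t)f$ plus continuity of evaluations gives joint continuity of $(t,w)\mapsto (T(t)f)'(w)$, hence joint $C^1$ regularity of $u$), and condition~$(E)$ plays no role here --- the paper only invokes it later, in Lemma~\ref{lem:4}.
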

\begin{proof}
	Let  $u(t,z)=(T(t)f)(z)$ for a fixed $z\in\Omega$ and $0\leq t<\tau(z)$. Now define $w$ on $[0,t]$ by 
	\[w(s)=u(t-s,\varphi(s,z)).\]
	Then 
	\[
	\begin{array}{ll}
	w'(s) &= -u_t(t-s,\varphi(s,z))+u_z(t-s,\varphi (s,z))\varphi_t(s,z)\\
	  & = -u_t(t-s,\varphi(s,z))+u_z(t-s,\varphi (s,z))G(\varphi(s,z))\\
	   & = 0
	  \end{array}
	\]
	by Lemma~\ref{lem:2}. It follows that $w(t)=w(0)$ with $w(0)=u(t,z)$ and 
	\[w(t)=u(0,\varphi(t,z))=(T(0)f)(\varphi(t,z))=f(\varphi(t,z)).\]
	 \end{proof}
 So far we did not use hypothesis $(E)$, but it is needed in the following lemma. 
 
\begin{lem}\label{lem:4}
 One has $\tau(z)=\infty$ for all $z\in \Omega$, that is, the semiflow is global on $\Omega$.  
\end{lem}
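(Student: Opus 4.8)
The plan is to argue by contradiction, exploiting the fact that the orbit $t\mapsto T(t)f$ is continuous up to and including $t=\tau(z)$, while the ODE-theoretic description of a finite lifespan forces the trajectory to escape $\Omega$ toward $\partial\Omega\cup\{\infty\}$. Condition $(E)$ will then be exactly what is needed to rule this out.

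First I would suppose that $\tau(z)<\infty$ for some $z\in\Omega$. By the standard results recalled in Section~\ref{sec:2}, there is an increasing sequence $(t_n)_n$ in $[0,\tau(z))$ with $t_n\to\tau(z)$ such that $z_n:=\varphi(t_n,z)$ converges either to a point $w\in\partial\Omega$ or to $\infty$; in either case $w\in(\overline{\Omega}\cup\{\infty\})\setminus\Omega$. Since each $t_n<\tau(z)$, Lemma~\ref{lem:3} applies and yields, for every $f\in X$,
\[
f(z_n)=f(\varphi(t_n,z))=(T(t_n)f)(z).
\]

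Next I would use the analytic structure of the semigroup. Because $T$ is a $C_0$-semigroup, $t\mapsto T(t)f$ is continuous on $[0,\infty)$, so $T(t_n)f\to T(\tau(z))f$ in $X$. As $X\hookrightarrow\Hol(\Omega)$, the evaluation functional $\delta_z$ is bounded on $X$, whence
\[
\lim_{n\to\infty}f(z_n)=\lim_{n\to\infty}(T(t_n)f)(z)=(T(\tau(z))f)(z)
\]
exists in $\C$, and this holds for \emph{every} $f\in X$. I would then invoke the evaluation condition $(E)$: we have $z_n\in\Omega$, $z_n\to w\in\overline{\Omega}\cup\{\infty\}$, and $\lim_n f(z_n)$ exists in $\C$ for all $f\in X$, so $(E)$ forces $w\in\Omega$. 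This contradicts $w\notin\Omega$ from the previous step, and therefore $\tau(z)=\infty$ for every $z\in\Omega$.

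The only genuinely delicate point is making sure the hypotheses of Lemma~\ref{lem:3} and of $(E)$ mesh. The ODE theory produces the sequence $(t_n)_n$ strictly inside $[0,\tau(z))$, which is precisely the range in which Lemma~\ref{lem:3} is valid, while the strong continuity of the semigroup—applied at the endpoint $t=\tau(z)$, where the orbit $T(\cdot)f$ is still defined even though the flow $\varphi(\cdot,z)$ is not—is what converts the purely dynamical boundary escape into a convergent sequence of point evaluations. Once these two facts are aligned, the application of $(E)$ is immediate and the rest is a direct substitution.
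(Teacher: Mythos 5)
Your proof is correct and follows essentially the same argument as the paper: assume $\tau(z)<\infty$, take the escaping sequence $z_n=\varphi(t_n,z)$ from the ODE theory, use Lemma~\ref{lem:3} together with strong continuity of $T$ and boundedness of $\delta_z$ to see that $\lim_n f(z_n)$ exists for every $f\in X$, and then contradict condition $(E)$. In fact you spell out the two justifications (strong continuity at $t=\tau(z)$ and $\delta_z\in X'$) that the paper's one-line limit computation leaves implicit.
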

\begin{proof}
	Assume that there exists $z\in\Omega$ such that $\tau(z)<\infty$. Then there exist an increasing  sequence $(t_n)_n$ converging to $\tau(z)$ and $\alpha\in\partial\Omega\cup\{\infty\}$ such that $\lim_{n\to\infty}\varphi(t_n,z)=\alpha$.
	Let $f\in X$. Then  
	\[\lim_{n\to\infty}f(\varphi(t_n,z))=\lim_{n\to \infty} ((T(t_n)f)(z)=(T(\tau(z))f(z), \]
which contradicts $(E)$. 
\end{proof}	

\begin{proof}[Proof of Theorem~\ref{th:2}]
It follows from Lemma~\ref{lem:3} and Lemma~\ref{lem:4}	that $T(t)$ is given by 
\[T(t)f=f\circ\varphi(t,\cdot) \mbox{ for all }t\geq 0.\]
Thus, it remains to show that the domain of $A$ is maximal. Let $f\in X$ such that $g=Gf'\in X$. Then, for $z\in \Omega$,
\[\frac{d}{ds}f(\varphi(s,z))=g(\varphi(s,z))\mbox{ for all }s\geq 0.\]
Hence, for $t>0$, 
\[\int_0^t(T(s)g)(z)ds = \int_0^t g(\varphi(s,z))ds=f(\varphi(t,z))-f(z)=(T(t)f)(z)-f(z).\]
Therefore we have
\[\frac{1}{t}\int_0^tT(s)gds=\frac{1}{t}(T(t)f-f).\]
Passing to the limit as $t\downarrow 0$ shows that $f\in D(A)$ and $Af=g$. 
\end{proof}	
It is remarkable that the domain is always maximal and that for the complex differential operator boundary conditions never occur, in contrast to  the real analogue.
\begin{exam}
	On $X=L^1(0,1)$, let the operator $A_{max}$ be given by $D(A_{max})=AC[0,1]$ (the space of all absolutely continuous functions on $[0,1]$) and $A_{max} f=f'$. Then $A_{max}$ is not a generator. Indeed, for example, the restriction $A$ of $A_{max}$ given by 
	\[D(A):=\{f\in AC[0,1]:f(1)=0\}\mbox{ and } Af=f'\]
	generates the $C_0$ semigroup $T$ given by $T(t)f=f\circ \varphi_t$ where 
	\[\varphi(t,x)=\left\{    \begin{array}{lcl}
	x+t & \mbox{if} &x+t\leq 1\\
	0    & \mbox{if} & x+t>1.
	\end{array}\right.\] 
\end{exam}

Finally, note that it is not possible to remove the hypothesis $(E)$, as clarified by the following result. 
\begin{prop}\label{prop:ce}
There exist a Banach space $X\hookrightarrow \Hol(\D)$ and $G\in \Hol(\D)$ such that the operator $A$ on $X$ defined by 
\[D(A):=\{f\in X:Gf'\in X\}\mbox{ and }Af=Gf'\]
generates a $C_0$-semigroup $T$, but $T$ does not consist of composition operators.  
 \end{prop}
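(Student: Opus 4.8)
The plan is to make the failure of $(E)$ do all the work. I would choose $G$ so that its semiflow on $\D$ is \emph{not} global (it exits $\D$ in finite time), together with a space $X$ all of whose elements extend holomorphically across $\partial\D$. The extension allows the non‑global flow to be continued past the exit time, producing a genuine $C_0$‑semigroup on $X$ whose action is composition with maps that are \emph{not} self‑maps of $\D$; that is exactly what prevents $T$ from being a semigroup of composition operators, while leaving the generator in the form $Gf'$.

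Concretely I would take $G\equiv 1$, so that formally $Af=f'$, and let $\Pi=\{z\in\C:\re z>-2\}$, which contains $\D$. Let $X$ be the Hardy space $H^2(\Pi)$ of this half‑plane, viewed as a space of functions on $\D$ by restriction. First I would verify $X\hookrightarrow\Hol(\D)$: restriction from $\Pi$ to the open subset $\D$ is injective since $\Pi$ is connected, and point evaluations at $z\in\D\subset\Pi$ are bounded because $H^2(\Pi)$ is a reproducing‑kernel Hilbert space; Proposition~\ref{prop2.1}(iii) then applies. Note that $(E)$ fails here: for $z_n\in\D$ with $z_n\to 1\in\partial\D$ and any $f\in X$, the limit $\lim_n f(z_n)=f(1)$ exists since $f$ is holomorphic on $\Pi\ni 1$, yet $1\notin\D$.

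Next I would identify the semigroup and its generator. Under the Paley--Wiener/Laplace‑transform identification $H^2(\Pi)\cong L^2(0,\infty)$, the right translation $T(t)f=f(\,\cdot+t)$ corresponds to multiplication by $e^{-st}$ on $L^2(0,\infty)$, a contractive $C_0$‑semigroup whose generator is multiplication by $-s$ on $\{g:sg\in L^2\}$. Transporting back, $T$ is a $C_0$‑semigroup on $X$ whose generator is $f\mapsto f'$ on the maximal domain $\{f\in X:f'\in X\}$; since $G\equiv 1$ this is precisely $D(A)=\{f\in X:Gf'\in X\}$ with $Af=Gf'$. Thus the operator $A$ of the statement generates $T$.

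Finally, the non‑composition step. Suppose for contradiction that $T(t)=C_{\psi_t}$ for holomorphic self‑maps $\psi_t:\D\to\D$. Then $f(\psi_t(z))=(T(t)f)(z)=f(z+t)$ for all $f\in X$ and $z\in\D$. Since $H^2(\Pi)$ separates the points of $\Pi$ (its reproducing kernels at distinct points are linearly independent), and both $\psi_t(z)$ and $z+t$ lie in $\Pi$, this forces $\psi_t(z)=z+t$. Taking $z=0$ and $t>1$ gives $\psi_t(0)=t\notin\D$, contradicting $\psi_t:\D\to\D$; hence $T$ does not consist of composition operators. I expect the genuine obstacle to be the third paragraph, namely the careful justification that right translation on $H^2$ of a half‑plane is a $C_0$‑semigroup with generator exactly $f\mapsto f'$ on the maximal domain; once that is in hand, the failure of $(E)$, the separation of points, and the final contradiction are routine.
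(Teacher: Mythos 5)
Your construction is correct, and at its core it is the same device as the paper's: take $X$ to be the restrictions to $\D$ of a Hardy space on a strictly larger domain (with the transferred norm), and choose $G$ whose flow exits $\D$ in finite time while remaining in the larger domain, so that the transported semigroup is composition on the big domain but cannot be composition on $\D$. The difference lies in the implementation, and yours is genuinely more elementary. The paper takes the larger domain to be the disc of radius $2$ and a Berkson--Porta generator $\widetilde G(z)=F(z)(\overline b z-1)(z-b)$ with $|b|>1$; it must then invoke Theorem~\ref{th:bp} (rescaled) for globality of the flow and \cite[Theorem 3.4]{ACP2015} to know that the induced semigroup on $H^2$ of the big disc is $C_0$ with generator $\widetilde G f'$ on the maximal domain. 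You take the half-plane $\Pi$ and $G\equiv 1$, so the flow is translation and the semigroup/generator analysis reduces via Paley--Wiener to multiplication by $e^{-st}$ on an $L^2$ space; nothing beyond classical facts is needed, which is what your choice buys (the paper's choice, in exchange, stays within bounded domains and disc-based machinery). Two points should be tightened. First, a cosmetic one: with $\Pi=\{\re z>-2\}$, the Laplace identification gives a \emph{weighted} space $L^2\bigl((0,\infty),e^{4s}ds\bigr)$ (or absorb the shift into the kernel $e^{-s(v+2)}$); this changes nothing, since multiplication by $e^{-st}$ is still a contraction $C_0$-semigroup there with generator multiplication by $-s$ on its maximal domain. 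Second, and more substantively: transporting the generator from $H^2(\Pi)$ to $X$ only gives you the domain $\{\widetilde f|_\D:\widetilde f,\widetilde f'\in H^2(\Pi)\}$, and the Proposition requires this to equal the maximal domain $\{g\in X:g'\in X\}$. The nontrivial inclusion needs the identity theorem: if $g=\widetilde f|_\D$ and $g'=h|_\D$ with $h\in H^2(\Pi)$, then $h=\widetilde f'$ on $\D$, hence on all of $\Pi$, so $\widetilde f'\in H^2(\Pi)$. This is exactly the ``Claim'' that the paper proves explicitly; in your write-up it is silently assumed in the phrase ``on the maximal domain,'' and without it you have only shown that the generator is a restriction of the operator $A$ in the statement, not $A$ itself. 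Once that one-line argument is added, your proof is complete.
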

\begin{proof}
Let $\Omega:=\{z\in\C:|z|<2\}$ and let \[X:=\{g\in \Hol(\D):\exists f\in H^2(\Omega), f_{|\D}=g\}.\]
By the uniqueness theorem for holomorphic functions, the mapping $Uf:=f_{|\D}$ is a bijection from $H^2(\Omega)$ to $X$, and then 
\[\|Uf\|_X:=\|f\|_{H^2(\Omega)}\]
defines a complete norm on $X$, with $X \hookrightarrow \Hol(\D)$. Moreover, the choice of the norm makes $U$ an isometric isomorphism from $H^2(\Omega)$ to $X$.  
 
 Let $F:\Omega\to \C$ be a holomorphic function such that $\mathop{Re}( F(z))\geq 0$ for all $z\in \Omega$. Choose $b\in \Omega$ such that $|b|>1$ and define $\widetilde{G}\in \Hol(\Omega)$ by 
 \[\widetilde{G}(z)=F(z)(\overline{b}z-1)(z-b)\mbox{ for all }z\in\Omega.\]
 Applying Theorem~\ref{th:bp} to $\Omega$ instead of $\D$, it follows that $\widetilde{G}$ generates a global semiflow $\varphi$ on $\Omega$, i.e. 
 $\varphi:[0,\infty)\times \Omega\to\Omega$ satisfies
 \[\varphi_t(t,z)=G(\varphi(t,z))\mbox{ and }\varphi(0,z)=z.\]
 Moreover $\varphi(t,\cdot)$ is holomorphic on $\Omega$ for all $t\geq 0$, and the Denjoy-Wolff point of $\varphi(t,\cdot)$ is $b$, i.e.
 \[\lim_{t\to\infty}\varphi(t,z)=b\mbox{ for all }z\in \Omega. \] 
 In particular, the semiflow does not leave invariant the open unit disc. By \cite[Theorem 3.4]{ACP2015},  
 \[\widetilde{T}(t)f=f\circ \varphi(t,\cdot)\]
 defines a $C_0$-semigroup on $H^2(\Omega)$ whose generator is given by 
 \[D(\widetilde{A})=\{f\in H^2(\Omega):Gf'\in H^2(\Omega)\}\mbox { and }\widetilde{A}f= \widetilde{G}f'.\]
 Since $U$ is an isometric isomorphism, 
 \[T(t):=U\widetilde{T}(t)U^{-1}\]
 defines a $C_0$-semigroup $T$ on $X$, whose generator is given by 
 \[D(A)=\{g\in X:U^{-1}g\in D(\widetilde{A})\mbox{ and }Ag=U\widetilde{A}U^{-1}g.\] 
 Let $G=\widetilde{G}_{|\D}$. \\
 
 \textbf{Claim}: $D(A)=\{g\in X:Gg'\in X\}$ and $Ag=Gg'$. 
 
 \textit{Proof of the Claim.} 
 Indeed, let $g\in D(A)$. Then $U^{-1}g\in D(\widetilde{A})$, i.e. there exists $f\in H^2(\Omega)$ such that 
 $\widetilde{G}f'\in H^2(\Omega)$ and $f_{|\D}=g$. Moreover, 
 \[Ag=U\widetilde{A}U^{-1}g=(\widetilde{G}f')_{|\D}=Gg'.\]
 Conversely, let $g\in X$ such that $Gg'\in X$. Then there exists $f\in H^2(\Omega)$  such that $f_{|\D}=g$ and there exists $h\in H^2(\Omega)$ such that $h_{|\D}=Gg'$. Thus, $\widetilde{G}f'\in \Hol(\Omega)$ and 
 \[\widetilde{G}f'_{|\D}=Gg'=h_{|\D}.\]
 It follows that $\widetilde{G}f'=h$, and then $\widetilde{G}f'\in H^2(\D)$. Consequently, $f\in D(\widetilde{A})$ and so $g=Uf\in D(A)$ and 
 \[Ag=U\widetilde{A}U^{-1}g=U\widetilde{A}f=\widetilde{G}f'_{|\D}=Gg'.\]
The semigroup $T$ is given for $g\in X$ by 
\[(T(t)g)(z)=f(\varphi(t,z)) \mbox{ for all }z\in\D ,\]
where $f\in H^2(\Omega)$ such that $f_{|\D}=g$. But $T(t)$ is not a composition operator since the semiflow 
does not leave $\D$ invariant. 
\end{proof}
\begin{exam}\label{ex1} 
\noindent	
\begin{itemize}
	\item[a)] The spaces $H^p(\Omega)$, $1\leq p<\infty$ satisfy Condition $(E)$ for $\Omega=\D$ or $\Omega=\C^+$ (the right-half plane).
	\item[b)] The disc algebra does not satisfy Condition $(E)$.  
	\item[c)] Let $\beta=(\beta_n)_{n\geq 0}$ be a sequence of positive reals such that\\ $\liminf_{n\to\infty}\beta_n^{1/n}\geq 1$,  $1\leq p<\infty$ and let 
	\[H^p(\beta):=\left\{f\in\Hol(\D):\left( \frac{\widehat{f}(n)}{n!} \right)_{n\geq 0}\in \ell^p(\beta)\right\}, \]
	where 
	\[\ell^p(\beta):=\{ (a_n)_{n\geq 0}:\sum_{n\geq 0}|a_n|^p\beta^p_n<\infty\}. \]
	Then $H^p(\beta)$ is a Banach space and $H^p(\beta)\hookrightarrow\Hol(\D) $ (see \cite[Prop. 2.1]{delhi1}). By \cite[Prop.~2.5 and Remark~2.6]{delhi1}, the following assertions hold:\\
	1) $p>1$ and let $q$ such that $1/p +1/q=1$. Then $H^p(\beta)$ satisfies Condition $(E)$ if and only if $\sum_{n\geq 0}\beta_n^{-q}=\infty$. \\
	2) Let $p=1$. Then $H^1(\beta)$ satisfies Condition $(E)$ if and only if $\inf_{n\geq 0}\beta_n=0$.

\end{itemize}	
	
\end{exam}

\section{Banach spaces with maximal domain}\label{sec:4bis}  
In this section we consider other conditions which are satisfied for the disc algebra (in contrast to $(E)$). 
The main assumption (maximality of the domain, see below) is very natural in view of the construction given in Proposition~\ref{prop:ce}    .\\

Let $\Omega \subset \C$ be an open connected bounded non-empty set. We assume that the boundary satisfies a weak regularity condition, namely  that if $D(z,r)\setminus \{z\}\subset \Omega$, then $z\in \Omega$ for each open disc $D(z,r)$ centered at $z$ and of radius $r>0$.  This is certainly the case if $\Omega$ is simply connected. It is not difficult to see that this topological condition  implies that the boundary of $\Omega$ has no isolated point. 

Denote by $e_1$ the identity function, i.e.   $e_1(z)=z$ for all $z\in\Omega$. \\
We say that the domain $\Omega$ is \textit{maximal} for $X$ if for each $w\in\partial\Omega$ and each $\varepsilon>0$, there exists 
$f\in X$ which does not have a holomorphic extension to $\Omega\cup D(w,\varepsilon)$.  Here we denote 
by $D(w,\varepsilon):=\{z\in\C :|w-z|<\varepsilon\}$  the disc centered at $w$ and of radius $\varepsilon$. 

We also consider the following \textit{density condition}:
 
\[\begin{array}{ll}
(D) & \mbox{For all }w\in\partial \Omega, \mbox{ there exists }\varepsilon>0\mbox{ such that  the space}\\
 &  \{ f\in X: \exists \widetilde{f}\in  \Hol(\Omega\cup D(w,\varepsilon)) , \widetilde{f}_{|\Omega}=f \} \mbox{ is dense in }X. 
 \end{array}
 \]

\vspace{0.2cm}
Now, let $G\in \Hol(\Omega)$ and assume that $T$ is a $C_0$-semigroup whose generator $A$ is given by 
\[Af=Gf'\mbox{ for all }f\in D(A).\]
\begin{thm}\label{th:main}
	Let $X$ be a Banach space such that $X\hookrightarrow \Hol(\Omega)$ where $\Omega$ is maximal for $X$.  Suppose that $e_1\in X$  and that $X$ satisfies the condition $(D)$. Then  
	the semiflow $\varphi$ associated with $G$ is global and 
	\[(T(t)f)(z)=f(\varphi(t,z))\]
	for all $f\in X,z\in\Omega$ and $t\geq 0$. Moreover, 
	\[D(A)=\{f\in X:Gf'\in X\}.\]
\end{thm}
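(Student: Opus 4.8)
The plan is to reduce everything to the single assertion that the semiflow is global; the composition formula and the maximality of $D(A)$ then come essentially for free. Since $e_1\in X$, Lemmas~\ref{lem:1}, \ref{lem:2} and \ref{lem:3} apply unchanged (their proofs never invoke $(E)$), so for every $f\in X$ and $z\in\Omega$ we have $(T(t)f)(z)=f(\varphi(t,z))$ on $0\le t<\tau(z)$, and with $f=e_1$ this gives $\varphi(t,z)=g_t(z)$, where $g_t:=T(t)e_1\in X\subseteq\Hol(\Omega)$. Once globality is known, Lemma~\ref{lem:3} yields $T(t)f=f\circ\varphi(t,\cdot)$ for all $t\ge0$, and the maximal-domain assertion follows verbatim as in the proof of Theorem~\ref{th:2}: for $f\in X$ with $Gf'\in X$ one has $\frac1t\int_0^tT(s)(Gf')\,ds=\frac1t\bigl(T(t)f-f\bigr)\to Gf'$ as $t\downarrow0$, whence $f\in D(A)$ and $Af=Gf'$.

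To establish globality I would use the elementary continuation principle: \emph{if $g_s(\Omega)\subseteq\Omega$ for all $s\in[0,t]$, then $\tau(z)>t$ for every $z\in\Omega$}. Indeed, were $\tau(z)\le t$, then by strong continuity of $s\mapsto g_s$ in $X\hookrightarrow\Hol(\Omega)$ we would get $\varphi(s,z)=g_s(z)\to g_{\tau(z)}(z)\in\Omega$ as $s\uparrow\tau(z)$, an interior limit contradicting the escape characterisation of a finite lifetime on the bounded set $\Omega$. Hence the whole theorem reduces to the \emph{Self-map Lemma}: $g_t(\Omega)\subseteq\Omega$ for every $t\ge0$, which forces $\tau\equiv\infty$.

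The Self-map Lemma is the heart of the matter, and this is where maximality, the density condition $(D)$ and the weak boundary regularity enter. Arguing by contradiction through the first exit time $t_*:=\inf\{t:g_t(\Omega)\not\subseteq\Omega\}$ and using strong continuity, the limit $g:=g_{t_*}=T(t_*)e_1\in X$ satisfies $g(\Omega)\subseteq\overline\Omega$; failure of the self-map property at $t_*$ produces either an interior $z^*$ with $g(z^*)=\beta\in\partial\Omega$, or a sequence of points approaching $\partial\Omega$ whose images leave $\Omega$. I would treat the first, generic situation, the second being handled by the same mechanism. Assuming $g$ non-constant, its critical values are discrete while, by weak regularity, $\partial\Omega$ has no isolated point; so I may take $\beta$ to be a regular value, yielding a biholomorphism $g\colon U\to V$ from a neighbourhood $U$ of a preimage onto a neighbourhood $V$ of $\beta$, with inverse $\psi\colon V\to U$. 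Passing to the limit $s\uparrow t_*$ in $(T(s)h)(z)=h(g_s(z))$ and using, for $h$ in the dense class $Y$ furnished by $(D)$, the holomorphic continuation of $h$ across $\beta$, one obtains $(T(t_*)h)(z)=h(g(z))$ for $z\in U$ with $g(z)\in\Omega$; this identity then extends to all $h\in X$ by density and boundedness of $T(t_*)$. Consequently $H:=(T(t_*)h)\circ\psi$ is holomorphic on $V$, agrees with $h$ on $V\cap\Omega$, and so extends every $h\in X$ holomorphically to $\Omega\cup D(\beta,\varepsilon')$ — contradicting the maximality of $\Omega$ for $X$.

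The degenerate case $g\equiv\beta$ I would dispose of separately: the same limit computation gives $T(t_*)h=\widetilde h(\beta)\,\mathbf 1$ for $h\in Y$, so $T(t_*)$ maps the dense set $Y$, hence all of $X$, into the constants and thus has rank one; but $T(t_*)=T(t_*/2)^2$ is a product of the operators $T(t_*/2)$, which are composition with the non-constant map $\varphi(t_*/2,\cdot)$ and hence injective, so $T(t_*)$ is injective, forcing $\dim X\le1$, which is excluded by maximality. The step I expect to be genuinely delicate is the extension-across-$\partial\Omega$ argument above: choosing a regular exit value, and in particular ruling out the variant in which the self-map property fails only through near-boundary points escaping in vanishing time — a situation I would again break by applying the local-inverse-plus-maximality mechanism at the limiting boundary point. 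Everything else is routine continuation and approximation, and note that, in contrast to Theorem~\ref{th:2}, the condition $(E)$ is nowhere used, its role being taken over by maximality together with $(D)$.
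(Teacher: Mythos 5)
Your overall strategy coincides with the paper's: reduce everything to globality of the semiflow, and kill a finite escape time by producing, from the non-constant function $T(t)e_1$, a local biholomorphism onto a neighbourhood of a boundary point and then extending every $f\in X$ holomorphically across $\partial\Omega$ via $(T(t)f)\circ\Psi^{-1}$, contradicting maximality; the composition formula and the maximality of $D(A)$ then follow exactly as in Theorem~\ref{th:2}, as you say. Your case (a), where $t_*=\inf_{z}\tau(z)$ is attained at some $z^*$ and $g_{t_*}=T(t_*)e_1$ is non-constant, is correct; in fact it is simpler than you make it, since $\tau\ge t_*$ everywhere forces $\tau(z)>t_*$ whenever $g_{t_*}(z)\in\Omega$ (an interior limit of the trajectory would allow continuation), so Lemma~\ref{lem:3} gives $(T(t_*)h)(z)=h(g_{t_*}(z))$ for \emph{all} $h\in X$ directly and condition $(D)$ is not even needed there. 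Your degenerate (constant) case is also fine, and your case (b) with $t_*>0$ is vacuous by the flow identity $\tau(z)=t_*+\tau(\varphi(t_*,z))\ge 2t_*$.

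The genuine gap is precisely the case you flag as delicate and then wave off: $t_*=0$ not attained, i.e.\ points escaping in arbitrarily small time. There you must work at an individual escaping point $z_0$ with attained exit time $t_0=\tau^+(z_0)$ small, and the difficulty is that nearby points may have $\tau^+<t_0$; hence neither Lemma~\ref{lem:3} at time $t_0$ nor your limit derivation of the key identity (which requires $s<t_0\le\tau^+(z)$ for all $z$ in a full neighbourhood) is available, and ``applying the mechanism at the limiting boundary point'' is not meaningful, since the mechanism needs an interior point with an attained exit time. What closes this case --- and what the paper does --- is: fix the non-constancy threshold $\widetilde{t_0}$ \emph{first}; at any $z_0$ with $t_0=\tau^+(z_0)<\widetilde{t_0}$, take $h$ in the dense class given by $(D)$ with extension $\widetilde{h}$ across the exit point $w_0$; observe that the two functions $z\mapsto (T(t_0)h)(z)$ and $z\mapsto\widetilde{h}\bigl(u(t_0,z)\bigr)$, $u(t,\cdot)=T(t)e_1$, are holomorphic on a neighbourhood $\mathcal U$ of $z_0$ and agree on the open set $\mathcal U_0=\{z\in\mathcal U:\tau^+(z)>t_0\}$, which is non-empty because it contains the backward-flow points $\varphi(-\delta,z_0)$; conclude equality on all of $\mathcal U$ by the identity theorem, and only then pass to all $f\in X$ by density. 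This identity-theorem-plus-backward-flow device (the reason the paper introduces the flow for negative time, and the place where $(D)$ genuinely enters) is exactly what is missing from your proposal; without it the vanishing-escape-time case, and hence the theorem, is not proved.
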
  
\begin{proof}
We consider the semiflow also for negative time, i.e. for $z\in \Omega$, 
\[\varphi(\cdot,z):(\tau^-(z), \tau^+(z))\to \Omega,\]
where 	$-\infty \leq \tau^-(z)<0<\tau^+(z)\leq \infty,$
\[  \varphi_t(t,z)=G(\varphi(t,z))\mbox{ for }t\in (\tau^-(z),\tau^+(z))\mbox{ and }\varphi(0,z)=z.\]
We know from Section~\ref{sec:3} that 
\begin{equation}\label{eq:new}
(T(t)f)(z)=f(\varphi(t,z))\mbox{ for all }z\in\Omega,0\leq t<\tau^+(z), f\in X.
\end{equation}
Let $u(t,z):=(T(t)e_1)(z)$ for all $t\geq 0$ and $z\in\Omega$. Then $u(t,\cdot)\in\Hol(\Omega)$ since it is in $X$,  and moreover, by Lemma~\ref{lem:3}, 
\[u(t,z)=\varphi(t,z)\mbox{ if }0\leq t<\tau^+(z).\]	
	
Since $T(t)e_1\to e_1$ in $X$, and therefore in $\Hol(\Omega)$ as $t\downarrow 0$, 	there exists $\widetilde{t_0}>0$ such that $T(t)e_1$ is not constant whenever $0\leq t\leq \widetilde{t_0}$. 
Assume that there exists $z_0\in\Omega$ such that $t_0:=\tau^+(z_0)<\widetilde{t_0}$,
 It follows that $w_0=u(t_0,z_0)\in\partial\Omega$. \\
 Using $(D)$, choose $\varepsilon>0$ such that 
 \[X_0:=\{f\in X: \exists \widetilde{f}\in  \Hol(\Omega\cup D(w_0,\varepsilon)) , \widetilde{f}_{|\Omega}=f \} \mbox{ is dense in }X. \]
 Let ${\mathcal U}$ be an open connected  neighborhood of $z_0$ such that $u(t_0,z)\in D(w_0,\varepsilon)$ for all $z\in {\mathcal U}$. Now set $\Psi(z)=u(t_0,z)$ for all $z\in{\mathcal U}$ and ${\mathcal V}:=\Psi({\mathcal U})$. 
 Since $\Psi$ is not constant, ${\mathcal V}$ is an open connected subset of $\C$ containing $w_0$. 
 
 Let $f\in X_0$ and define the functions $g$ and $h$ on $\mathcal U$ by 
 \[g(z)=\widetilde{f}(u(t_0,z))\mbox{ and }h(z)=(T(t_0)f)(z). \]
 The set ${\mathcal U}_0:=\{z\in {\mathcal U}:\tau^+(z)>t_0\}$ is open by \cite[Chap.8, \S 7, Thm. 2]{ms74}. Moreover  ${\mathcal U}_0$ is  non-empty since $\varphi(-\delta,z_0)\in{\mathcal U}_0$ for $\delta>0$ sufficiently small. Note that for all $z\in {\mathcal U}_0$, $u(t_0,z)=\varphi(t_0,z)$ and thus, by (\ref{eq:new}), $g$ coincides with $h$ on ${\mathcal U}_0$. By the uniqueness theorem for holomorphic functions $g=h$ on $\mathcal U$. In particular, if $z\in {\mathcal U}$ is such that
  $u(t_0,z)\in\Omega$, then 
 \[(T(t_0)f)(z)=f(u(t_0,z))    \mbox{ for all }f\in X_0.\]
 The density of $X_0$ in $X$ implies that, for all $f\in X$ and all $z\in  {\mathcal U}$ such that $u(t_0,z)\in {\Omega}$,
 \begin{equation}\label{eq:density}
 (T(t_0)f)(z)=f(u(t_0,z)). 
 \end{equation} 
 We may assume that $\overline{{\mathcal U}}\subset \Omega$. 
Since $\Psi=u(t_0,\cdot)$ is not constant, the set $\{z\in{\mathcal U}:\Psi'(z)=0\}$ is finite. Since, by our assumption, the boundary does not contain isolated points, $\partial\Omega\cap {\mathcal V}$ is an infinite set. Consequently, there exists $z_1\in{\mathcal U}$ such that $\Psi'(z_1)\neq 0$   and $\Psi(z_1)\in\partial\Omega$. Therefore, there exists an open neighborhood ${\mathcal U}_1$ of $z_1$ such 
\[{\mathcal U}_1\subset {\mathcal U}\mbox{ and }\Psi:{\mathcal U}_1\to \Psi({\mathcal U}_1)=:{\mathcal V}_1 \mbox{ is biholomorphic}.  \]
Note that ${\mathcal V}_1$ is an open neighborhood of $w_1:=\Psi(z_1)\in\partial\Omega$. 

Now, for $f\in X$, define 
\[\widetilde{f}(w)=(T(t_0)f)(\Psi^{-1}(w))\mbox{ for all }w\in {\mathcal V}_1,\]
so that $\widetilde{f}$ is holomorphic on ${\mathcal V}_1$.\\
 We now show that $\widetilde{f}=f$ on ${\mathcal V}_1\cap \Omega$.   	
Let $w\in {\mathcal V}_1\cap {\Omega}$, $z=\Psi^{-1}(w)$. Then $u(t_0,z)=w$ and 
\[\widetilde{f}(w)  =(T(t_0)f)(z)=f(w)\mbox{ by }(\ref{eq:density}).\]
Therefore we have shown that each $f\in X$ has a holomorphic extension to ${\mathcal V}_1$ which contradicts the hypothesis of maximality of $\Omega$ for $X$. \\
 Consequently $\tau^+(z)\geq \widetilde{t_0}$ for all $z\in \Omega$ and $\varphi(t,\cdot)=u(t,\cdot)$ leaves invariant $\Omega$ for all $t\in[0,\widetilde{t_0}]$.\\ This implies that $\tau^+(z)=\infty$ for all $z\in \Omega$. In fact, otherwise, 
 \[t_2:=\inf\{\tau^+(z):z\in\Omega \} \in [\widetilde{t_0},\infty).\]
 Let $t_2-\widetilde{t_0}<t_3<t_2$, $z\in\Omega$, and define 
 \[w(t)=  \left\{    
 \begin{array}{lcl}
 \varphi(t,z) & \mbox{for} & 0\leq t\leq t_3\\
 \varphi(t-t_3,\varphi(t_3,z)) & \mbox{for} & t_3<t<t_3+\widetilde{t_0}.
 \end{array}\right.  \]
 Then $w$ solves (\ref{eq:pb}) on $[0,t_3+\widetilde{t_0}]$. Thus $\tau^+(z)\geq t_3+\widetilde{t_0}>t_2$ for all $z\in\Omega$, contradicting the definition of $t_2$. 
 
 We have shown that the semiflow $\varphi$ is global. Now we conclude as in the proof of Theorem~\ref{th:2}. 
 
 \end{proof}	 

\begin{exam}
	The disc algebra  satisfies the density condition $(D)$ and has $\D$ as maximal domain. For this space also sufficient conditions are known when global semiflows lead to a $C_0$-semigroup (see \cite{team} and the references given there).  
\end{exam}
 Concluding, Theorem~\ref{th:main} allows us to give a complete characterization for all the spaces
  mentioned in the introduction.
\section{Conclusion}\label{sec:5bis}
Let $X$ be one of the following spaces:
\begin{enumerate}
	\item the Hardy spaces $H^p(\D)$, $1\leq p<\infty$,
	\item the Bergman spaces $A^p(\D)$,  $1\leq p<\infty$, 
	\item the Dirichlet space $\mathcal D$,
	\item the space $VMOA$ of all analytic functions on $\D$ of vanishing mean oscillation,
	\item the little Bloch space ${\mathcal B}_0$.
	\end{enumerate}
Then we obtain the following final result. 
\begin{thm} 
	Let $G\in \Hol(\D)$. Consider the operator $A$ on $X$ given by 
	\[D(A):=\{f\in X:Gf'\in X\}\mbox{ and }Af=Gf'.\]
Then the following conditions are equivalent:
\begin{itemize}
	\item[(i)]  $A$ generates a $C_0$-semigroup $T$ on $X$;
	\item[(ii)] the semiflow $\varphi$ generated by $G$ is global.
\end{itemize}
In that case $T$ is given by
 \[(T(t)f)(z)=f(\varphi(t,z)),\]
 for all $t>0$, $z\in\D$, $f\in X$. 
\end{thm}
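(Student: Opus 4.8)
The plan is to establish the two implications separately. The implication $(ii)\Rightarrow(i)$ is the classical direction and requires essentially no new argument: for each of the five spaces (1)--(5) it is known (see the examples collected in \cite{BP,CM,team,GY}) that a global holomorphic semiflow $\varphi$ on $\D$ induces a $C_0$-semigroup of composition operators $T(t)f=f\circ\varphi(t,\cdot)$ on $X$. By the description (\ref{eq:first}) recalled in the introduction, the generator of such a semigroup is exactly the operator $f\mapsto Gf'$ with maximal domain $\{f\in X:Gf'\in X\}$, that is, the operator $A$. Hence $A$ generates the $C_0$-semigroup $T$, and the displayed formula for $T$ holds.

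For the converse $(i)\Rightarrow(ii)$ I would apply Theorem~\ref{th:main}. Since $\D$ is bounded and simply connected, the weak boundary regularity condition of Section~\ref{sec:4bis} holds automatically. The identity $e_1$ belongs to each of the spaces (1)--(5), and in each of them the polynomials are dense; consequently the density condition $(D)$ holds automatically, as observed in the introduction. Thus, once I check that $\D$ is maximal for $X$, Theorem~\ref{th:main} immediately yields that $\varphi$ is global and, as a bonus, reconfirms both the formula $(T(t)f)(z)=f(\varphi(t,z))$ and the maximality of $D(A)$. For the Hardy spaces one could instead invoke condition $(E)$, valid by Example~\ref{ex1}~a), and apply Theorem~\ref{th:2}.

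It remains to verify maximality of $\D$. Fix $w\in\partial\D$ and consider the principal branch $f_w(z)=(1-\overline{w}z)^{1/2}$, which is holomorphic on $\D$ because $1-\overline{w}z$ stays in the disc $\{|\zeta-1|<1\}$, away from the negative real axis. A short estimate shows that $f_w$ lies in each of the five spaces: it is bounded, hence in $H^p(\D)$ and $A^p(\D)$; it extends continuously to $\overline{\D}$, hence belongs to the disc algebra and a fortiori to $VMOA$; the bound $|1-\overline{w}z|\geq 1-|z|$ gives $(1-|z|^2)|f_w'(z)|\leq C(1-|z|^2)^{1/2}\to 0$, so $f_w\in\mathcal{B}_0$; and $\int_{\D}|f_w'|^2\,dA\sim\int_{\D}|1-\overline{w}z|^{-1}\,dA<\infty$, so $f_w\in\mathcal{D}$. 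However $f_w$ has a branch-point singularity at $w$ and therefore admits no holomorphic extension to any disc $D(w,\varepsilon)$. This proves that $\D$ is maximal for $X$.

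The only space-dependent input is this last verification, and that is where I expect the main (if modest) obstacle to lie: one must confirm that a single explicit singular function genuinely belongs to the particular space at hand and that its singularity really obstructs holomorphic continuation. Everything else reduces to the general machinery of Theorems~\ref{th:2} and \ref{th:main}, together with the classical construction of composition semigroups from global semiflows.
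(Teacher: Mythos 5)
Your proposal is correct and follows the same overall route as the paper: the implication $(ii)\Rightarrow(i)$ is delegated to the literature (the paper cites \cite[Section 4]{team} specifically, which is the sharper reference since it covers all five spaces and identifies the generator with its maximal domain), and $(i)\Rightarrow(ii)$ is obtained from Theorem~\ref{th:main} after checking its hypotheses, with polynomial density giving condition $(D)$ exactly as in the paper. The one place where you genuinely diverge is the verification that $\D$ is maximal for $X$. The paper argues that the disc algebra is contained in $X$ for the four spaces other than $\mathcal D$ (implicitly using that the disc algebra contains functions with $\partial\D$ as natural boundary, e.g.\ lacunary series), and then handles the Dirichlet space separately via the characterization $f\in\mathcal D \Leftrightarrow f'\in A^2$, transferring maximality from the Bergman space by taking antiderivatives. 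You instead exhibit, for each $w\in\partial\D$, the single explicit function $f_w(z)=(1-\overline{w}z)^{1/2}$, check directly that it lies in all five spaces, and use the branch point at $w$ (via the parity-of-zero-order argument for $f_w^2=1-\overline{w}z$) to exclude any holomorphic extension across $w$. This is a valid and pleasantly unified alternative: it treats all five spaces at once, is entirely self-contained, and avoids both the natural-boundary fact and the separate Dirichlet argument; the paper's version, by contrast, requires no computation at all for the four spaces containing the disc algebra. Your side remarks --- that the boundary regularity condition holds since $\D$ is simply connected, that $e_1\in X$, and that the Hardy case could alternatively be settled by condition $(E)$ and Theorem~\ref{th:2} via Example~\ref{ex1}~a) --- are all accurate.
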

\begin{proof}
$(ii)\Rightarrow (i)$: we refer to \cite[Section 4]{team}.	\\
$(i)\Rightarrow (ii)$: The space $X$ contains the polynomials as dense subspace in each case. Thus the density condition $(D)$ is satisfied. Moreover, the disc algebra is contained in $X$, when $X\neq {\mathcal D}$, which shows that $\D$ is maximal for $X\neq {\mathcal D}$. For $X={\mathcal D}$, since $f\in {\mathcal D}$ if and only if $f'$ is in the Bergman space $A^2$, $\D$ is also maximal for $\mathcal D$. Now Theorem~\ref{th:main} shows that $(ii)$ holds and that $T$ has the desired from.   
	
\end{proof}	 

\section{Conformal equivalence}\label{sec:final}
The study of (global) semiflows is very advanced on the open unit disc, but also other domains occur (see the monographs \cite{shoik,RS2005}; also Berkson and Porta \cite{BP} consider the disc and the half-plane). 

If the domain is simply connected, for the question we treat here, it can always be reduced to an equivalent problem on the disc. The price to be paid is to lose a natural norm on the given Banach space. We make this more precise.

 Let $\Omega$ be a simply connected domain different from $\mathbb  C$ and let $X(\Omega)\hookrightarrow Hol(\Omega)$ be a Banach space. Consider a conformal mapping $h$ from $\D$ onto $\Omega$. Then $X(\D):=\{ f\circ h:f\in X(\Omega)    \}$ is a Banach space with the transferred norm and $X(\D)\hookrightarrow Hol(\D)$. 
 Then $C_hf=f\circ h$ defines an isomorphism from $X(\Omega)$ onto $X(\D)$. Now let $G\in Hol(\Omega)$ and let $T$ be a $C_0$-semigroup on $X(\Omega)$ whose generator $A$ is given by
 \[  D(A)=\{   f\in X(\Omega)  :Gf'\in X(\Omega), \quad Af=Gf'  \}. \]
 Then $S(t):=C_hT(t)C_h^{-1}$ defines a $C_0$-semigroup on $X(\D)$. Its generator $B$ is given by 
 \[  D(B)=\{  g\in X(\D):C_h^{-1}g\in D(A)   \}, \quad Bg=C_hAC_h^{-1}g.   \]
 One easily computes that 
 \[  D(B)=\{   g\in X(\D)  :Hg'\in X(\D), \quad Bg=Hg'  \}, \]
 where $H=\frac{1}{h'}G\circ h\in Hol(\D)$. 
 
 Thus $B$ has the same form as $A$. Moreover, if $S$ is given by a global semiflow $\varphi$ on $\D$, i.e. $S(t)g=g\circ \varphi(t,\cdot)$, then $T$ is given by the global semiflow $\psi$ on $\D$ defined by 
 \[\psi(t,z)=h(\varphi(t,h^{-1}(z))) \quad  (t>0,z\in\Omega),\quad T(t)f=f\circ \psi(t,\cdot). \]
 Therefore a criterion on $X(\D)$ leads to a criterion on $X(\Omega)$. However, $X(\D)$ might carry a norm which is not easy to compute. 
 
 We give now a more concrete example. Consider $\Omega=\mathbb P$, the upper half-plane. Then $h(z)=i\frac{1+z}{1-z}$ is a conformal mapping from $\D$ to $\mathbb P$. Consider $X(\Omega)=H^p({\mathbb P})$ where $1\leq p<\infty$. Then the transferred norm on $X(\D)$ is given by $\|g\|_{X(\D)}=\|wg\|_{H^p(\D)}$, where $w:\D\to (0,\infty)$ is a weight (see \cite[Sec.2]{cp03}). If we want to use the usual norm on $H^p(\D)$,  we can consider the isomorphism $U:X(\D)\to H^p(\D)$ given by $Ug=wg$.   
     Then $\widetilde{S}(t)=US(t)U^{-1}$ defines a $C_0$-semigroup on $H^p(\D)$. Its generator $\widetilde{B}$ is given by 
     \[ D(\widetilde{B})=\{  f\in H^p(\D):w^{-1}f\in X(\D) \},\quad \widetilde{B}f=Hf'-\frac{w'}{w}Hf.   \]
     It is no longer of the form we consider here and is rather an additive perturbation of our usual form. If $\widetilde{S}$ consists of composition operators, then $S$ will consist of weighted composition operators,  which are much more complicated to handle.   
    We refer to \cite[Thm. 2.6]{matache} and \cite[Lem.2.1]{cp03} for more details concerning similarity transforms.  
     \\

\textbf{Acknowledgments:}  The authors thank Eva Gallardo-Guti\'errez and Dimitri Yakubovitch for several comments on a  preliminary version of this paper.   The authors are also grateful to the referee for valuable comments concerning general domains instead of the open unit disc. 

\bibliographystyle{amsplain}

\end{document}